\documentclass[reqno]{amsart}
\usepackage{amssymb,setspace}
\usepackage{ifpdf}
\ifpdf
 \usepackage[hyperindex,pagebackref]{hyperref}%
\else
 \expandafter\ifx\csname dvipdfm\endcsname\relax
 \usepackage[hypertex,hyperindex,pagebackref]{hyperref}
 \else
 \usepackage[dvipdfm,hyperindex,pagebackref]{hyperref}
 \fi
\fi
\theoremstyle{plain}
\newtheorem{theorem}{Theorem}[section]
\newtheorem{lemma}{Lemma}[section]
\newtheorem{corollary}{Corollary}[section]
\theoremstyle{remark}
\newtheorem{remark}{Remark}[section]
\DeclareMathOperator{\td}{d}
\numberwithin{equation}{section}
\allowdisplaybreaks[4]

\begin{document}

\title[Bounds for combinations of Toader mean and arithmetic mean]
{Bounds for the combination of Toader mean and the arithmetic mean in terms of the contraharmonic mean}

\author[W.-D. Jiang]{Wei-Dong Jiang}
\address[Wei-Dong Jiang]{Department of Information Engineering, Weihai Vocational College, Weihai City, Shandong Province, 264210, China}
\email{\href{mailto: W.-D. Jiang <jackjwd@hotmail.com>}{jackjwd@hotmail.com}}

\author[F. Qi]{Feng Qi}
\address[Feng Qi]{Department of Mathematics, College of Science, Tianjin Polytechnic University, Tianjin City, 300387, China}
\email{\href{mailto: F. Qi <qifeng618@gmail.com>}{qifeng618@gmail.com}, \href{mailto: F. Qi <qifeng618@hotmail.com>}{qifeng618@hotmail.com}}
\urladdr{\url{http://qifeng618.wordpress.com}}

\begin{abstract}
In the paper, the authors find the greatest value $\lambda $ and the least value $\mu $ such that the double inequality
\begin{multline*}
C(\lambda a+(1-\lambda)b,\lambda b+(1-\lambda )a)<\alpha A(a,b)+(1-\alpha)T(a,b)\\
< C(\mu a+(1-\mu)b,\mu b+(1-\mu )a)
\end{multline*}
holds for all $\alpha\in(0,1)$ and $a,b>0$ with $a\ne b$, where
$$
C(a,b)=\frac{a^{2}+b^{2}}{a+b},\quad A(a,b)=\frac{a+b}2,
$$
and
$$
T(a,b)=\frac{2}{\pi}\int_{0}^{{\pi}/{2}}\sqrt{a^2\cos^2\theta+b^2\sin^2\theta}\,\td\theta
$$
denote respectively the contraharmonic, arithmetic, and Toader means of two positive
numbers $a$ and $b$.
\end{abstract}

\keywords{bound; contraharmonic mean; arithmetic mean; Toader mean; complete elliptic integrals}

\subjclass[2010]{26E60, 33E05}

\thanks{This paper was typeset using \AmS-\LaTeX}

\maketitle

\section{Introduction}
For $p\in \mathbb{R}$ and $a,b>0$, the contraharmonic mean $C(a,b)$, the $p$-th power mean $M_{p}(a,b)$, and Toader mean $T(a,b)$ are respectively defined by
$$
C(a,b)=\frac{a^{2}+b^{2}}{a+b},\quad
M_{p}(a,b)=
\begin{cases}
\biggl(\dfrac{a^{p}+a^{p}}{2}\biggr)^{{1}/{p}},&p\ne 0,\\
\sqrt{ab}\,,&p=0,
\end{cases}
$$
and
\begin{equation}\label{eq1.1}
T(a,b)=\dfrac{2}{\pi}\int_{0}^{\pi/2}\sqrt{a^2\cos^2\theta+b^2\sin^2\theta}\,\td\theta\\
=
\begin{cases}
\dfrac{2a}\pi\mathcal{E}\Biggl(\sqrt{1-\biggl(\dfrac{b}a\biggr)^2}\,\Biggr),&a>b,\\
\dfrac{2b}\pi\mathcal{E}\Biggl(\sqrt{1-\biggl(\dfrac{b}a\biggr)^2}\,\Biggr),&a<b,\\
a,&a=b,
\end{cases}
\end{equation}
where
$$
{\mathcal{E}}={\mathcal{E}}(r)=\int_{0}^{\pi/2}\sqrt{1-r^2\sin^2\theta}\,\td\theta
$$
for $r\in[0,1]$ is the complete elliptic integral of the second kind. For more information on complete elliptic integrals, please see~\cite{Guo-Qi-MIA-11, Qi-Cui-Xu-MIA-99, Qi-Huang-Tamkang-98, Qi-Niu-Guo-JIA-09} and plenty of references therein.
\par
Recently, Toader mean has attracted attention of several researchers. In particular, many remarkable inequalities for $T(a,b)$ can be found in the literature~\cite{Chu-Miao-Kun-AAA-12, Chu-Miao-Kun-Results-Math-12, Chu-Wang-Qiu-Indian-Acad-12, Chu-Miao-Kun-2Qiu-AAA-11, Wang-Chu-Qiu-Jiang-JAT-12}.
It was conjectured in~\cite{Vuorinen-Madras-98} that
\begin{equation}\label{(1.3)-eq}
M_{{3}/{2}}(a,b)<T(a,b)
\end{equation}
for all $a,b>0$ with $a\ne b$. This conjecture was proved in~\cite{Barnard-Pearce-Richards-SIAM-JMA-00, Qiu-Shen-Hangzhou-97} respectively.
In~\cite{Alzer-Qiu-JCAM-04}, a best possible upper bound for Toader mean was presented by
\begin{equation}
T(a,b)<M_{{\ln{2}}/{\ln{({\pi}/{2})}}}(a,b)
\end{equation}
for all $a,b>0$ with $a\ne b$.
\par
It is not difficult to verify that
\begin{equation}\label{(1.5)-eq}
C(a,b)>M_{2}(a,b)=\sqrt{\frac{a^{2}+b^{2}}{2}}\,
\end{equation}
for all $a,b>0$ with $a\ne b$.
From~\eqref{(1.3)-eq} to~\eqref{(1.5)-eq} one has
$$
A(a,b)<T(a,b)<C(a,b)
$$
for all $a,b>0$ with $a\ne b$.
\par
For positive numbers $a, b>0$ with $a\ne b$, let
\begin{equation}\label{J(x)-dfn-eq}
J(x)=C\bigl(xa+(1-x)b,xb+(1-x)a\bigr)
\end{equation}
on $\bigl[\frac{1}{2},1\bigr]$. It is not difficult to verify that $J(x)$ is continuous and strictly increasing on $\bigl[\frac{1}{2},1\bigr]$. Note that $J\bigl(\frac12\bigr)=A(a,b)<T(a,b)$ and $J(1)=C(a,b)>T(a,b)$.
\par
In~\cite{Chu-MK-Ma-JMI-13} it was proved that the double inequality
\begin{equation}\label{eq1.6}
 C\bigl(\alpha a+(1-\alpha)b,\alpha b+(1-\alpha)a\bigr)<T(a,b)<C\bigl(\beta a+(1-\beta)b,\beta b+(1-\beta)a\bigr)
\end{equation}
holds for all $a,b>0$ with $a\ne b$ if and only if $\alpha\le \frac34$ and $\beta\ge \frac12+\frac{\sqrt{4\pi-\pi^2}\,}{2\pi}$.
\par
The main purpose of the paper is to find the greatest value $\lambda $ and the least value $\mu $ such that the double inequality
\begin{multline}
C(\lambda a+(1-\lambda )b,\lambda b+(1-\lambda )a)<\alpha A(a,b)+(1-\alpha)T(a,b)\\
< C(\mu a+(1-\mu )b,\mu b+(1-\mu )a)
\end{multline}
holds for all $\alpha\in(0,1)$ and $a,b>0$ with $a\ne b$. As applications, we also present new bounds for the complete
elliptic integral of the second kind.

\section{Preliminaries and Lemmas}
In order to establish our main result, we need several formulas and lemmas below.
\par
For $0<r<1$ and $r'=\sqrt{1-r^2}\,$, Legendre's complete elliptic integrals of the first and second kinds are defined in~\cite{Bowman-Dover-61, Byrd-Friedman-Springer-71} by
\begin{equation*}
\begin{cases}\displaystyle
{\mathcal{K}}={\mathcal{K}}(r)=\int_{0}^{{\pi}/{2}}\frac1{\bigl(1-r^2\sin^2\theta\bigr)^{1/2}}\td\theta,\\
{\mathcal{K}}'={\mathcal{K}}'(r)={\mathcal{K}}(r'),\\
{\mathcal{K}}(0)=\dfrac\pi2,\\
{\mathcal{K}}(1)=\infty
\end{cases}
\end{equation*}
and
\begin{equation*}
\begin{cases}\displaystyle
{\mathcal{E}}={\mathcal{E}}(r)=\int_{0}^{\pi/2}\bigl(1-r^2\sin^2\theta\bigr)^{1/2}\td\theta,\\
{\mathcal{E}}'={\mathcal{E}}'(r)={\mathcal{E}}(r'),\\
{\mathcal{E}}(0)=\dfrac\pi2,\\
{\mathcal{E}}(1)=1
\end{cases}
\end{equation*}
respectively.
\par
For $0<r<1$, the formulas
\begin{gather*}
\frac{\td \mathcal{K}}{\td r}=\frac{\mathcal{E}-(r')^2{\mathcal{K}}}{r(r')^2},\quad
\frac{\td \mathcal{E}}{\td r}=\frac{\mathcal{E}-\mathcal{K}}{r},\quad
\frac{\td \big({\mathcal{E}}-(r')^2{\mathcal{K}}\bigr)}{\td r}=r{\mathcal{K}},\\
\frac{\td ({\mathcal{K}}-{\mathcal{E}})}{\td r}=\frac{r{\mathcal{E}}}{(r')^2},\quad
{\mathcal{E}}\biggl(\frac{2\sqrt{r}\,}{1+r}\biggr)=\frac{2\mathcal{E}-(r')^2\mathcal{K}}{1+r}
\end{gather*}
were presented in~\cite[Appendix~E, pp.~474--475]{Anderson-Vamanamurthy-Vuorinen-Wiley-conform-97}.

\begin{lemma}[{\cite[Theorem~3.21(1)~and~3.43 Exercise~13(a)]{Anderson-Vamanamurthy-Vuorinen-Wiley-conform-97}}]\label{lem2.1}
The function $\frac{\mathcal{E}-(r')^2\mathcal{K}}{r^2}$ is strictly increasing from $(0,1)$ onto $\bigl(\frac\pi4,1\bigr)$ and the function $2\mathcal{E}-(r')^2\mathcal{K}$ is increasing from $(0,1)$ onto $\bigl(\frac\pi2,2\bigr)$.
\end{lemma}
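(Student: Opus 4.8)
The plan is to reduce both monotonicity claims to the positivity of the single function $g(r)=\mathcal{E}-(r')^2\mathcal{K}$ on $(0,1)$, which serves as the common building block. First I would read off from the supplied formula $\frac{\td\bigl(\mathcal{E}-(r')^2\mathcal{K}\bigr)}{\td r}=r\mathcal{K}$ that $g'(r)=r\mathcal{K}>0$ for $r\in(0,1)$; since $g(0^+)=\mathcal{E}(0)-\mathcal{K}(0)=0$, this shows $g$ is strictly increasing and hence strictly positive on $(0,1)$. As a by-product, the formula $\frac{\td\mathcal{K}}{\td r}=\frac{g}{r(r')^2}$ then shows that $\mathcal{K}$ is itself strictly increasing, a fact I will use below, and this ordering of steps avoids any circularity.

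For the first assertion I would write $f(r)=\frac{g(r)}{r^2}$ as a quotient of two functions vanishing at $r=0$ and apply the monotone form of l'H\^opital's rule. The ratio of derivatives is $\frac{g'(r)}{(r^2)'}=\frac{r\mathcal{K}}{2r}=\frac{\mathcal{K}(r)}{2}$, which is strictly increasing because $\mathcal{K}$ is; hence $f$ is strictly increasing on $(0,1)$. The endpoint values then follow: as $r\to0^+$ the monotone rule (or a short series expansion of $\mathcal{E}$ and $\mathcal{K}$) gives $f(0^+)=\frac{\mathcal{K}(0)}{2}=\frac{\pi}{4}$, while as $r\to1^-$ one uses $\mathcal{E}(1)=1$ together with $(r')^2\mathcal{K}(r)\to0$ to obtain $f(1^-)=1$.

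For the second assertion I would differentiate $\phi(r)=2\mathcal{E}-(r')^2\mathcal{K}=\mathcal{E}+g$ directly. Combining $\frac{\td\mathcal{E}}{\td r}=\frac{\mathcal{E}-\mathcal{K}}{r}$ with $g'(r)=r\mathcal{K}$ yields
\[
\phi'(r)=\frac{\mathcal{E}-\mathcal{K}}{r}+r\mathcal{K}=\frac{\mathcal{E}-(1-r^2)\mathcal{K}}{r}=\frac{g(r)}{r},
\]
which is positive on $(0,1)$ by the first step; thus $\phi$ is strictly increasing. The limits $\phi(0^+)=2\mathcal{E}(0)-\mathcal{K}(0)=\pi-\frac{\pi}{2}=\frac{\pi}{2}$ and $\phi(1^-)=2$ (again using $(r')^2\mathcal{K}\to0$) then pin down the range $\bigl(\frac{\pi}{2},2\bigr)$.

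The differentiations are routine, and the whole argument hinges on the single inequality $g>0$ plus the monotone l'H\^opital rule. The only genuinely delicate point is the behavior at $r=1$, where $\mathcal{K}$ diverges: I would justify $\lim_{r\to1^-}(r')^2\mathcal{K}(r)=0$ explicitly via the logarithmic asymptotic $\mathcal{K}(r)\sim\ln\frac{4}{r'}$, so that the blow-up of $\mathcal{K}$ is killed by the factor $(r')^2$. Everything else follows mechanically from the derivative formulas recorded just before the statement.
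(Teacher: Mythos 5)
Your proof is correct, and it is worth noting that the paper itself contains no proof of this lemma: it is imported verbatim from Anderson--Vamanamurthy--Vuorinen (Theorem~3.21(1) and Exercise~3.43~13(a)), so what you have done is reconstruct the missing argument, and you have done so along essentially the same lines as the cited source, whose signature tool is precisely the monotone form of l'H\^opital's rule. Each step checks out: establishing $g(r)=\mathcal{E}-(r')^2\mathcal{K}>0$ first, via $g'(r)=r\mathcal{K}>0$ and $g(0^+)=\mathcal{E}(0)-\mathcal{K}(0)=0$, legitimately unlocks the monotonicity of $\mathcal{K}$ through $\frac{\td\mathcal{K}}{\td r}=\frac{g}{r(r')^2}$ without circularity; the monotone l'H\^opital rule applies to $g(r)/r^2$ since both numerator and denominator vanish at $0^+$ and the derivative ratio $\frac{r\mathcal{K}}{2r}=\frac{\mathcal{K}}{2}$ is strictly increasing; the computation $\phi'(r)=\frac{\mathcal{E}-\mathcal{K}}{r}+r\mathcal{K}=\frac{\mathcal{E}-(1-r^2)\mathcal{K}}{r}=\frac{g(r)}{r}>0$ is correct; and you rightly flag the only delicate boundary issue, $\lim_{r\to1^-}(r')^2\mathcal{K}(r)=0$, which your logarithmic asymptotic $\mathcal{K}(r)\sim\ln\frac{4}{r'}$ settles, giving the limits $\frac\pi4$, $1$, $\frac\pi2$, and $2$ as claimed. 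Two minor observations: you prove \emph{strict} monotonicity of $2\mathcal{E}-(r')^2\mathcal{K}$, which is stronger than the ``increasing'' stated in the lemma (and harmless); and for the second function one could alternatively invoke the Landen-type identity $\mathcal{E}\bigl(\frac{2\sqrt{r}\,}{1+r}\bigr)=\frac{2\mathcal{E}-(r')^2\mathcal{K}}{1+r}$ recorded in the paper, though your direct differentiation reducing everything to the single inequality $g>0$ is cleaner and more self-contained.
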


\begin{lemma}\label{lem2.2}
Let $u,\alpha\in(0,1)$ and
\begin{equation}\label{eq2.1}
 f_{u,\alpha}(r)=ur^2-(1-\alpha) \biggl\{\frac{2}{\pi}\bigl[2\mathcal{E}(r)-\bigl(1-r^2\bigr) \mathcal{K}(r)\bigr]-1\biggr\}.
\end{equation}
Then $f_{u,\alpha}>0$ for all $r\in(0,1)$ if and only if $u\ge (1-\alpha)\bigl(\frac4\pi-1\bigr)$ and $f_{u,\alpha}<0$ for all $r\in(0,1)$ if and only if $u\le\frac{1-\alpha}4$.
\end{lemma}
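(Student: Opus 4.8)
The plan is to reduce the two-sided claim to a single monotonicity statement about an auxiliary function. First I would introduce
\[
\phi(r)=\frac{1}{r^2}\biggl\{\frac{2}{\pi}\bigl[2\mathcal{E}(r)-\bigl(1-r^2\bigr)\mathcal{K}(r)\bigr]-1\biggr\}
\]
on $(0,1)$, so that $f_{u,\alpha}(r)=r^2\bigl[u-(1-\alpha)\phi(r)\bigr]$ and hence, for $r\in(0,1)$, one has $f_{u,\alpha}(r)>0$ exactly when $\frac{u}{1-\alpha}>\phi(r)$ and $f_{u,\alpha}(r)<0$ exactly when $\frac{u}{1-\alpha}<\phi(r)$. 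Once I know that $\phi$ is strictly increasing on $(0,1)$ with $\phi(0^+)=\frac14$ and $\phi(1^-)=\frac4\pi-1$, both equivalences follow at once: for the upper bound, $u\ge(1-\alpha)\bigl(\frac4\pi-1\bigr)$ forces $\frac{u}{1-\alpha}\ge\frac4\pi-1>\phi(r)$ for every $r\in(0,1)$, while any smaller $u$ is defeated by choosing $r$ close to $1$; the lower bound is handled symmetrically using the limit at $0$.

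Next I would pin down the two endpoint limits. The value $\phi(1^-)=\frac4\pi-1$ is immediate from Lemma~\ref{lem2.1}, since $2\mathcal{E}-\bigl(1-r^2\bigr)\mathcal{K}\to2$ as $r\to1^-$; the value $\phi(0^+)=\frac14$ follows from the Maclaurin expansions of $\mathcal{E}$ and $\mathcal{K}$, which give $\frac{2}{\pi}\bigl[2\mathcal{E}-\bigl(1-r^2\bigr)\mathcal{K}\bigr]-1=\frac14r^2+O\bigl(r^4\bigr)$.

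The core of the argument, and the step I expect to be the main obstacle, is the monotonicity of $\phi$. Writing $h(r)=\frac{2}{\pi}\bigl[2\mathcal{E}-\bigl(1-r^2\bigr)\mathcal{K}\bigr]-1$, a direct computation gives $\phi'(r)=\frac{rh'(r)-2h(r)}{r^3}$, so it suffices to show $rh'(r)-2h(r)>0$. Using the listed formulas $\frac{\td\bigl(\mathcal{E}-(1-r^2)\mathcal{K}\bigr)}{\td r}=r\mathcal{K}$ and $\frac{\td\mathcal{E}}{\td r}=\frac{\mathcal{E}-\mathcal{K}}{r}$, I obtain $h'(r)=\frac{2}{\pi}\cdot\frac{\mathcal{E}-(1-r^2)\mathcal{K}}{r}$ and, after simplification,
\[
rh'(r)-2h(r)=\frac{2}{\pi}\bigl[\bigl(1-r^2\bigr)\mathcal{K}-3\mathcal{E}+\pi\bigr]=:\frac{2}{\pi}Q(r).
\]
Since $Q(0)=0$, it is enough to prove $Q'(r)>0$ on $(0,1)$. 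Differentiating once more yields $Q'(r)=\frac{(2-r^2)\mathcal{K}-2\mathcal{E}}{r}$, so the whole problem collapses to the single inequality $(2-r^2)\mathcal{K}-2\mathcal{E}>0$.

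To finish I would establish this last inequality from the integral representation
\[
(2-r^2)\mathcal{K}-2\mathcal{E}=r^2\int_{0}^{\pi/2}\frac{2\sin^2\theta-1}{\sqrt{1-r^2\sin^2\theta}}\,\td\theta.
\]
The integrand changes sign at $\theta=\frac\pi4$, so positivity is not visible term by term; the decisive device is the substitution $\theta\mapsto\frac\pi2-\theta$, which sends $2\sin^2\theta-1$ to its negative and replaces $1-r^2\sin^2\theta$ by $1-r^2+r^2\sin^2\theta$. Averaging the integral with this transformed copy yields an integrand equal to $(2\sin^2\theta-1)$ times a difference of reciprocal square roots whose two factors always carry the same sign, so the averaged integrand is strictly positive off $\theta=\frac\pi4$. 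This gives $(2-r^2)\mathcal{K}-2\mathcal{E}>0$, hence $Q'>0$, hence $Q>0$, hence $\phi'>0$, which closes the chain.
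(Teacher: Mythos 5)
Your proof is correct, but it takes a genuinely different route from the paper's. The paper keeps $f_{u,\alpha}$ intact, computes $f_{u,\alpha}'(r)=2r[u-(1-\alpha)g(r)]$ with $g(r)=\frac{1}{\pi}\frac{\mathcal{E}-(r')^2\mathcal{K}}{r^2}$, imports the monotonicity of $g$ from Lemma~\ref{lem2.1}, and then runs a four-case analysis on $u$ (over $u\le\frac{1-\alpha}{4}$, $\frac{1-\alpha}{4}<u\le(1-\alpha)\bigl(\frac4\pi-1\bigr)$, $(1-\alpha)\bigl(\frac4\pi-1\bigr)\le u<\frac{1-\alpha}{\pi}$, and $u\ge\frac{1-\alpha}{\pi}$), using $f_{u,\alpha}(0^+)=0$, the sign of $f_{u,\alpha}(1^-)$, and the piecewise monotonicity of $f_{u,\alpha}$ to settle each case. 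You instead normalize by $r^2$ and characterize both inequalities at once via $\sup\phi$ and $\inf\phi$, which makes the if-and-only-if logic transparent and eliminates the case splitting (the paper's middle cases even overlap at their endpoints); the price is that you must prove $\phi$ strictly increasing, which you do from scratch. Your chain checks out: indeed $rh'(r)-2h(r)=\frac{2}{\pi}\bigl[\bigl(1-r^2\bigr)\mathcal{K}-3\mathcal{E}+\pi\bigr]$, $Q(0^+)=\frac\pi2-\frac{3\pi}2+\pi=0$, $Q'(r)=\frac{(2-r^2)\mathcal{K}-2\mathcal{E}}{r}$, and the symmetrization $\theta\mapsto\frac\pi2-\theta$ really does make the averaged integrand $\bigl(2\sin^2\theta-1\bigr)\bigl[\bigl(1-r^2\sin^2\theta\bigr)^{-1/2}-\bigl(1-r^2\cos^2\theta\bigr)^{-1/2}\bigr]$ nonnegative with strict positivity off $\theta=\frac\pi4$, since both factors change sign together at $\theta=\frac\pi4$; the endpoint limits $\phi(0^+)=\frac14$ and $\phi(1^-)=\frac4\pi-1$ are also right. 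Two observations on what each approach buys. First, since $g'(r)=\frac{1}{\pi}\frac{(2-r^2)\mathcal{K}-2\mathcal{E}}{r^3}$, your core inequality $(2-r^2)\mathcal{K}>2\mathcal{E}$ is \emph{exactly} the strict monotonicity of the function in Lemma~\ref{lem2.1}, so your argument is self-contained where the paper cites Anderson--Vamanamurthy--Vuorinen, and your symmetrization gives an elementary re-proof of that cited fact. Second, had you been willing to cite Lemma~\ref{lem2.1} as the paper does, your whole $Q$-chain could be bypassed: $\phi(r)=h(r)/r^2$ with $h(0^+)=0$ and $\frac{h'(r)}{(r^2)'}=g(r)$ strictly increasing, so $\phi$ is strictly increasing by the monotone form of l'H\^opital's rule (equivalently, $\phi$ is the running average of an increasing function), reducing your lemma to a two-line corollary. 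In short: the paper's proof is shorter modulo the citation but clunkier in its case logic; yours is longer but structurally cleaner and independent of the external monotonicity result.
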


\begin{proof}
It is clear that
\begin{align}\label{eq2.2}
 f_{u,\alpha}\bigl(0^+\bigr)&=0,\\\label{eq2.3}
 f_{u,\alpha}(1^-)&=u-(1-\alpha)\biggl(\frac{4}{\pi}-1\biggr),\\\label{eq2.4}
 f_{u,\alpha}'(r)&=2r[u-(1-\alpha)g(r)],
\end{align}
where $g(r)=\frac{1}{\pi}\frac{\mathcal{E}-(r')^2\mathcal{K}}{r^2}$.
\par
When $u\geq\frac{1-\alpha}{\pi}$, from~\eqref{eq2.4} and Lemma~\ref{lem2.1} and by the monotonicity of $g(r)$, it follows that $f_{u,\alpha}(r)$ is strictly increasing on $(0,1)$. Therefore, $f_{u,\alpha}(r)>0$ for all $r\in(0,1)$.
\par
When $u\le \frac{1-\alpha}{4}$, from~\eqref{eq2.4} and Lemma~\ref{lem2.1} and by the monotonicity of $g(r)$, we obtain that $f_{u,\alpha}(r)$ is strictly decreasing on $(0,1)$. Therefore, $f_{u,\alpha}(r)<0$ for all $r\in(0,1)$.
\par
When $\frac{1-\alpha}{4}<u\le (1-\alpha)\bigl(\frac4\pi-1\bigr)$, from~\eqref{eq2.3} and~\eqref{eq2.4} and by the monotonicity of $g(r)$, we see that there exists $\lambda\in(0,1)$ such that $f_{u,\alpha}(r)$ is strictly increasing in $(0,\lambda]$ and strictly decreasing in $[\lambda,1)$ and
\begin{equation}\label{eq2.5}
 f_{u,\alpha}(1^-)\le 0.
\end{equation}
Therefore, making use of the equation~\eqref{eq2.2}, the inequality~\eqref{eq2.5}, and the piecewise monotonicity of $f_{u,\alpha}(r)$ lead to the conclusion that there exists $0<\lambda< \eta < 1$ such that $f_{u,\alpha}(r)>0$ for $r\in(0,\eta)$ and $f_{u,\alpha}(r) < 0$ for $r\in (\eta,1)$.
\par
When $(1-\alpha)\bigl(\frac4\pi-1\bigr)\le u<\frac{1-\alpha}\pi$, by~\eqref{eq2.3}, it follows that
\begin{equation}\label{eq2.6}
 f_{u,\alpha}(1^-)\ge 0.
\end{equation}
\par
From~\eqref{eq2.3} and~\eqref{eq2.4} and by the monotonicity of $g(r)$, we see that there exists $\lambda\in(0,1)$ such that $f_{u,\alpha}(r)$ is strictly increasing in $(0,\lambda]$ and strictly decreasing in $[\lambda,1)$.
Therefore, $f_{u,\alpha}(r)>0$ for $r\in(0,1)$ follows from~\eqref{eq2.2} and~\eqref{eq2.6} together with the piecewise monotonicity of $f_{u,\alpha}(r)$.
\end{proof}

\section{Main Results}
Now we are in a position to state and prove our main results.

\begin{theorem}\label{th3.1}
If $\alpha\in(0,1)$ and $\lambda ,\mu \in\bigl(\frac12,1\bigr)$, then the double inequality
\begin{multline}\label{eq3.1}
 C(\lambda a+(1-\lambda )b,\lambda b+(1-\lambda )a)<\alpha A(a,b)+(1-\alpha)T(a,b)\\
 < C(\mu a+(1-\mu )b,\mu b+(1-\mu )a)
\end{multline}
holds for all $a,b>0$ with $a\ne b$ if and only if
\begin{equation*}
\lambda \le \frac{1}{2}+\frac{\sqrt{1-\alpha}\,}{4} \quad \text{and}\quad
\mu \ge \frac{1}{2}\Biggl[1+\sqrt{(1-\alpha)\biggl(\frac4\pi-1\biggr)}\,\Biggr].
\end{equation*}
\end{theorem}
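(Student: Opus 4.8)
The plan is to reduce the whole double inequality to the two sign conditions on the function $f_{u,\alpha}$ already settled in Lemma~\ref{lem2.2}, by means of two reductions: an elementary algebraic simplification of the contraharmonic combination, and a Landen-type change of variable that turns the Toader mean into the combination $2\mathcal{E}-(r')^2\mathcal{K}$.

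First, since $C$, $A$, and $T$ are symmetric and homogeneous of degree one, I would assume without loss of generality that $a>b>0$. Writing $C(p,q)=\frac{p^2+q^2}{p+q}=\frac{p+q}{2}+\frac{(p-q)^2}{2(p+q)}$ with $p=xa+(1-x)b$ and $q=xb+(1-x)a$, so that $p+q=a+b$ and $p-q=(2x-1)(a-b)$, gives the key identity
\[
 C\bigl(xa+(1-x)b,\,xb+(1-x)a\bigr)=A(a,b)+\frac{(2x-1)^2(a-b)^2}{2(a+b)}.
\]
Since $\alpha A+(1-\alpha)T-A=-(1-\alpha)(T-A)$, subtracting the middle term shows that the left half of~\eqref{eq3.1} is equivalent to $(2\lambda-1)^2\frac{(a-b)^2}{2(a+b)}<(1-\alpha)\bigl(T(a,b)-A(a,b)\bigr)$, and the right half is the reversed inequality with $\lambda$ replaced by $\mu$.

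Next I would divide through by $A(a,b)$, which is legitimate and produces homogeneous-invariant ratios. The left inequality becomes $(2\lambda-1)^2\frac{(a-b)^2}{(a+b)^2}<(1-\alpha)\bigl(\frac{T}{A}-1\bigr)$. Introducing $r=\frac{a-b}{a+b}\in(0,1)$, so that $\frac{b}{a}=\frac{1-r}{1+r}$ and $\sqrt{1-(b/a)^2}=\frac{2\sqrt r}{1+r}$ is exactly the Landen modulus, the formula $\mathcal{E}\bigl(\frac{2\sqrt r}{1+r}\bigr)=\frac{2\mathcal{E}-(r')^2\mathcal{K}}{1+r}$ recorded before Lemma~\ref{lem2.1} yields
\[
 \frac{T(a,b)}{A(a,b)}=\frac{2}{\pi}\bigl[\,2\mathcal{E}(r)-(1-r^2)\mathcal{K}(r)\,\bigr].
\]
Substituting this together with $\frac{(a-b)^2}{(a+b)^2}=r^2$ collapses the left inequality precisely to $f_{(2\lambda-1)^2,\,\alpha}(r)<0$ and the right inequality to $f_{(2\mu-1)^2,\,\alpha}(r)>0$, with $f_{u,\alpha}$ as in~\eqref{eq2.1}; note that $\lambda,\mu\in\bigl(\frac12,1\bigr)$ forces the parameters $u=(2\lambda-1)^2$ and $u=(2\mu-1)^2$ to lie in $(0,1)$, as required by the lemma.

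Finally, Lemma~\ref{lem2.2} supplies the equivalences directly: $f_{(2\lambda-1)^2,\alpha}(r)<0$ for all $r\in(0,1)$ iff $(2\lambda-1)^2\le\frac{1-\alpha}{4}$, that is $\lambda\le\frac12+\frac{\sqrt{1-\alpha}}{4}$; and $f_{(2\mu-1)^2,\alpha}(r)>0$ for all $r\in(0,1)$ iff $(2\mu-1)^2\ge(1-\alpha)\bigl(\frac4\pi-1\bigr)$, that is $\mu\ge\frac12\bigl[1+\sqrt{(1-\alpha)(\frac4\pi-1)}\,\bigr]$. Because every step above is an equivalence under reversible operations (multiplying by positive quantities, and taking square roots of nonnegative quantities using $2\lambda-1>0$ and $2\mu-1>0$), both sufficiency and necessity follow simultaneously. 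I expect the only genuinely delicate point to be correctly identifying and verifying the Landen substitution $b/a=\frac{1-r}{1+r}$ that converts the Toader mean into the quantity $2\mathcal{E}(r)-(r')^2\mathcal{K}(r)$ occurring in Lemma~\ref{lem2.2}; once that substitution is in place, everything else is bookkeeping.
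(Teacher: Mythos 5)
Your proposal is correct and follows essentially the same route as the paper: both reduce the double inequality, via the substitution $r=\frac{a-b}{a+b}$ (equivalently $t=\frac ba$, $r=\frac{1-t}{1+t}$) and the Landen identity $\mathcal{E}\bigl(\frac{2\sqrt r}{1+r}\bigr)=\frac{2\mathcal{E}-(r')^2\mathcal{K}}{1+r}$, to the sign conditions $f_{(2\lambda-1)^2,\alpha}<0$ and $f_{(2\mu-1)^2,\alpha}>0$ settled by Lemma~\ref{lem2.2}. Your decomposition $C(p,q)=\frac{p+q}{2}+\frac{(p-q)^2}{2(p+q)}$ is a slightly tidier bookkeeping of the same expansion the paper carries out directly, and your explicit remarks on necessity (that $r$ sweeps all of $(0,1)$ and that $2\lambda-1,2\mu-1>0$ make the square-root steps reversible) merely spell out what the paper leaves implicit.
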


\begin{proof}
Since $A(a,b)$, $T(a,b)$, and $C(a,b)$ are symmetric and homogeneous of degree one, without loss of generality, assume that $a>b$. Let $p\in\bigl(\frac12,1\bigr)$, $t=\frac{b}a\in(0,1)$, and $r=\frac{1-t}{1+t}$. Then
\begin{align*}
 & \quad C(pa+(1-p)b,pb+(1-p)a)-\alpha A(a,b)-(1-\alpha)T(a,b)\\
 &=a\frac{[p+(1-p){b}/{a}]^2+(p{b}/{a}+1-p)^2}{1+{b}/{a}}-\alpha a\frac{1+{b}/{a}}{2} -(1-\alpha)\frac{2a}{\pi} \mathcal{E}\Biggl(\sqrt{1-\biggl(\frac{b}a\biggr)^2}\,\Biggr)\\
 &=a\biggl\{\frac{[p+(1-p)t]^2+(pt+1-p)^2}{1+t}-\alpha\frac{1+t}{2} -(1-\alpha)\frac{2}{\pi}\mathcal{E}\Bigl(\sqrt{1-t^2}\,\Bigr)\biggr\}\\
 &=a\biggl\{\frac{(1-2p)^2r^2+1}{1+r}-\alpha\frac{1}{1+r} -(1-\alpha)\frac{2}{\pi}\frac{2\mathcal{E}-(r')^2\mathcal{K}}{1+r}\biggr\}\\
 &=\frac{a}{1+r}\biggl[(1-2p)^2r^2+1-\alpha -(1-\alpha)\frac2\pi\bigl(2\mathcal{E}-(r')^2\mathcal{K}\bigr)\biggr].
\end{align*}
From this and Lemma~\ref{lem2.2}, Theorem~\ref{th3.1} follows.
\end{proof}

\begin{corollary}
For $r\in(0,1)$ and $r'=\sqrt{1-r^2}\,$, we have
\begin{equation}\label{eq3.3}
\frac{\pi}{2}\bigg[\frac{17+30r'+17(r')^{2}}{8(1+r')}-\frac{3(1+r')}{2}\bigg]<\mathcal{E}(r)
<{\pi}\bigg[\frac{r'+{2}(1-r')^2/{\pi}}{1+r'}\bigg].
\end{equation}
\end{corollary}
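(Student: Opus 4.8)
The plan is to specialize Theorem~\ref{th3.1} to its sharp endpoints and then normalize. Since $A$, $T$, and $C$ are symmetric and homogeneous of degree one, I would set $a=1$ and $b=t$ with $t\in(0,1)$, so that by~\eqref{eq1.1} one has $T(1,t)=\frac2\pi\mathcal{E}\bigl(\sqrt{1-t^2}\,\bigr)$ while $A(1,t)=\frac{1+t}2$. Writing $p$ for either of the boundary parameters, a direct computation of the contraharmonic mean gives
\begin{equation*}
C\bigl(p+(1-p)t,\,pt+1-p\bigr)=\frac{(1+t)^2+(2p-1)^2(1-t)^2}{2(1+t)},
\end{equation*}
because the two arguments sum to $1+t$ and differ by $(2p-1)(1-t)$. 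Note that here I use the Toader mean in the direct form~\eqref{eq1.1}, not the $2\mathcal{E}-(r')^2\mathcal{K}$ reduction from the proof of Theorem~\ref{th3.1}; this is what keeps the final bounds free of $\mathcal{K}$.

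Next I would take $\lambda$ and $\mu$ at the extremal values furnished by Theorem~\ref{th3.1}, namely $(2\lambda-1)^2=\frac{1-\alpha}4$ and $(2\mu-1)^2=(1-\alpha)\bigl(\frac4\pi-1\bigr)$, and solve the double inequality~\eqref{eq3.1} for $T(1,t)$, hence for $\mathcal{E}$. From $\alpha A+(1-\alpha)T>C(\lambda\cdots)$ and $\alpha A+(1-\alpha)T<C(\mu\cdots)$ I obtain
\begin{equation*}
\frac{\pi}{2}\,\frac{C(\lambda\cdots)-\alpha\frac{1+t}2}{1-\alpha}<\mathcal{E}\bigl(\sqrt{1-t^2}\,\bigr)<\frac{\pi}{2}\,\frac{C(\mu\cdots)-\alpha\frac{1+t}2}{1-\alpha}.
\end{equation*}
The crucial step is the observation that, after inserting the formula for $C$, the numerator factors as
\begin{equation*}
C\bigl(p+(1-p)t,\,pt+1-p\bigr)-\alpha\frac{1+t}2=\frac{(1-\alpha)(1+t)^2+(2p-1)^2(1-t)^2}{2(1+t)},
\end{equation*}
and since $(2p-1)^2$ is proportional to $1-\alpha$ at both endpoints, the whole factor $1-\alpha$ cancels against the denominator. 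This is what makes the resulting bounds independent of $\alpha$; there is no genuine analytic obstacle here, only the need to keep the bookkeeping straight.

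Finally I would substitute $r=\sqrt{1-t^2}$, so that $t=\sqrt{1-r^2}=r'$ and $\mathcal{E}\bigl(\sqrt{1-t^2}\,\bigr)=\mathcal{E}(r)$, and simplify the two $\alpha$-free rational functions of $r'$. For the lower bound, $(2\lambda-1)^2=\frac{1-\alpha}4$ produces $(1+r')^2+\frac14(1-r')^2=\frac{5+6r'+5(r')^2}4$, which reproduces the left-hand side of~\eqref{eq3.3} after one checks the identity $\frac{5+6r'+5(r')^2}{8(1+r')}=\frac{17+30r'+17(r')^2}{8(1+r')}-\frac{3(1+r')}2$. For the upper bound, $(2\mu-1)^2=(1-\alpha)\bigl(\frac4\pi-1\bigr)$ gives $(1+r')^2+\bigl(\frac4\pi-1\bigr)(1-r')^2=4r'+\frac4\pi(1-r')^2$, leading to $\mathcal{E}(r)<\frac{\pi r'+(1-r')^2}{1+r'}$; since $(1-r')^2\ge 0$ this does not exceed the right-hand side of~\eqref{eq3.3}, so the stated upper bound follows a fortiori. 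The entire argument is therefore an algebraic specialization of Theorem~\ref{th3.1}, the only point requiring attention being the $(1-\alpha)$ cancellation described above.
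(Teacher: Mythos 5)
Your proof is correct, and in substance it is the same specialization the paper itself uses: the paper's entire proof of the corollary consists of putting $\alpha=\frac34$, $\lambda=\frac58$, and $\mu=\frac12\bigl(1+\frac{\sqrt{4/\pi-1}\,}{2}\bigr)$ into Theorem~\ref{th3.1}, which is exactly your endpoint computation carried out at one fixed $\alpha$. What you do differently buys two things. First, by keeping $\alpha$ arbitrary and exhibiting the cancellation of $1-\alpha$ --- your factorization $C\bigl(p+(1-p)t,pt+1-p\bigr)-\alpha\frac{1+t}{2}=\frac{(1-\alpha)(1+t)^2+(2p-1)^2(1-t)^2}{2(1+t)}$ is correct, as is the reduction of $C$ via the sum $1+t$ and difference $(2p-1)(1-t)$ of its arguments --- you show that the bounds produced by the sharp parameter values are independent of $\alpha$, which explains why the paper's choice $\alpha=\frac34$ is immaterial rather than fortunate. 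Second, your treatment of the upper bound is more precise than the printed statement: the sharp specialization gives $\mathcal{E}(r)<\frac{\pi r'+(1-r')^2}{1+r'}$, and one checks directly that the paper's own parameter choice yields this same quantity, whereas the right-hand side of \eqref{eq3.3} carries a coefficient $2$ on $(1-r')^2$ and is therefore strictly weaker (for instance, as $r\to1^-$ it tends to $2$ rather than to the sharp value $\mathcal{E}(1)=1$). Your \emph{a fortiori} step, using $(1-r')^2>0$ for $r\in(0,1)$, is exactly what reconciles the sharp computation with the corollary as stated, and it strongly suggests the factor $2$ in \eqref{eq3.3} is a slip in the paper. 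The remaining bookkeeping all checks out: the identity $\frac{17+30r'+17(r')^2}{8(1+r')}-\frac{3(1+r')}{2}=\frac{5+6r'+5(r')^2}{8(1+r')}$ holds, the substitution $t=r'$ correctly turns $T(1,t)$ into $\frac2\pi\mathcal{E}(r)$ via \eqref{eq1.1}, and the strictness of both inequalities at the endpoint values of $\lambda$ and $\mu$ is guaranteed by the ``if and only if'' form of Theorem~\ref{th3.1}.
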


\begin{proof}
This follows from letting $\alpha=\frac{3}{4}$, $\lambda=\frac{5}{8}$, and $\mu=\frac{1}{2}\Bigl(1+\frac{\sqrt{4/\pi-1}\,}{2}\Bigr)$ in Theorem~\ref{th3.1}.
\end{proof}

\section{Remarks}

\begin{remark}
Recently, the complete elliptic integrals have attracted attention of numerous mathematicians. In~\cite{Chu-Wang-Qiu-Indian-Acad-12}, it was established that
\begin{multline}\label{eq3.4}
\frac{\pi}{2}\bigg[\frac{1}{2}\sqrt{\frac{1+(r')^2}{2}}\,+\frac{1+r'}{4}\bigg]<\mathcal{E}(r)\\
<\frac{\pi}{2}\bigg[\frac{4-\pi}{\bigl(\sqrt{2}\,-1\bigr)\pi}\sqrt{\frac{1+(r')^2}{2}}\, +\frac{\bigl(\sqrt{2}\,\pi-4\bigr)(1+r')}{2\bigl(\sqrt{2}\,-1\bigr)\pi}\bigg],
\end{multline}
for all $r\in(0,1)$.
In~\cite{Guo-Qi-MIA-11} it was proved that
\begin{equation}\label{eq3.5}
\frac{\pi}{2}-\frac{1}{2}\log{\frac{(1+r)^{1-r}}{(1-r)^{1+r}}}<\mathcal{E}(r)
<\frac{\pi-1}{2}+\frac{1-r^2}{4r}\log{\frac{1+r}{1-r}},
\end{equation}
for all $r\in(0,1)$.
In~\cite{Yin-Qi-Ellip-Int.tex} it was presented that
\begin{equation}\label{eq3.6}
\frac{\pi}{2}\frac{\sqrt{6+2\sqrt{1-r^{2}}-3r^{2}}\,}{2\sqrt{2}\,}\le {\mathcal{E}}(r)\le \frac{\pi}{2}\frac{\sqrt{10-2\sqrt{1-r^{2}}-5r^{2}}\,}{2\sqrt{2}\,}
\end{equation}
for all $r\in(0,1)$.
In~\cite{Chu-Wang-Qiu-Indian-Acad-12} it was pointed out that the bounds in~\eqref{eq3.4} for $\mathcal{E}(r)$ are better than the bounds in~\eqref{eq3.5} for some $r\in (0,1)$.
\end{remark}

\begin{remark}
The lower bound in~\eqref{eq3.3} for $\mathcal{E}(r)$ is better than the lower bound in~\eqref{eq3.4}. Indeed,
\begin{multline*}
\frac{17+30x+17x^{2}}{8(1+x)}-\frac{3(1+x)}{2} -\bigg[\frac{1}{2}\sqrt{\frac{1+x^2}{2}}\,+\frac{1+x}{4}\bigg]\\
=\frac{3x^2+2x+3-2\sqrt{2(1+x^2)}\,(1+x)}{8(1+x)}
\end{multline*}
and
$$
(3x^2+2x+3)^2-\bigl[2\sqrt{2(1+x^2)}\,(1+x)\bigr]^2=(1-x)^4>0
$$
for all $x\in (0,1)$.
\end{remark}

\begin{remark}
The following equivalence relations show that the lower bound in~\eqref{eq3.3} for $\mathcal{E}(r)$ is better than the lower bound in~\eqref{eq3.6}:
\begin{gather*}
\frac{17+30x+17x^{2}}{8(1+x)}-\frac{3(1+x)}{2}>\frac{\sqrt{6+2x-3(1-x^2)}\,}{2\sqrt{2}\,}\\
\Longleftrightarrow(5x^2+6x+5)^2>8(x+1)^2(3x^2+2x+3)\\
\Longleftrightarrow(x-1)^4>0,
\end{gather*}
where $x\in (0,1)$.
\end{remark}

\end{document}